\newcounter{define}
\newcounter{assume}
\newcounter{exampl}
\newcommand{\ahmed}[1]{\textcolor{black}{#1}}
\newtheorem{theorem}{Theorem}
\newtheorem{definition}[define]{Definition}
\newtheorem{assumption}[assume]{Assumption}
\newtheorem{example}[exampl]{Example}
\newtheorem*{remark*}{Remark}
\begin{document}
	
	\title{Physics-Aware Neural Networks for Distribution System State Estimation}
	\author{
		Ahmed S. Zamzam, \IEEEmembership{Student Member,~IEEE,} and
		Nicholas D. Sidiropoulos,~\IEEEmembership{Fellow,~IEEE}
		\thanks{
			A. S. Zamzam is with the ECE Dept., Univ. of Minnesota, Minneapolis, MN 55455, USA. N. D. Sidiropoulos is with the Department of Electrical and Computer Engineering, University of Virginia, Charlottesville, VA 22904.
		}
	}
	\maketitle
	
\begin{abstract}
The distribution system state estimation problem seeks to determine the network state from available measurements. Widely used Gauss-Newton approaches are very sensitive to the initialization and often not suitable for real-time estimation. Learning approaches are very promising for real-time estimation, as they shift the computational burden to an offline training stage. Prior machine learning approaches to power system state estimation have been electrical model-agnostic, in that they did not exploit the topology and physical laws governing the power grid to design the architecture of the learning model. In this paper, we propose a novel learning model that utilizes the structure of the power grid. The proposed neural network architecture reduces the number of coefficients needed to parameterize the mapping from the measurements to the network state by exploiting the separability of the estimation problem. This prevents overfitting and reduces the complexity of the training stage. We also propose a greedy algorithm for phasor measuring units placement that aims at minimizing the complexity of the neural network required for realizing the state estimation mapping. Simulation results show superior performance of the proposed method over the Gauss-Newton approach.
\end{abstract}


\section{Introduction}
Distribution system state estimation (DSSE) is an important task for monitoring and control of distribution networks. DSSE takes as input a set of measurements of physical quantities in the network and provides an estimate of the system state, i.e., nodal voltages. Due to the rapid introduction of volatile renewable energy sources and controllable loads, modern distribution grids are challenged by unusual fluctuations in the operational conditions. Therefore, accurate real-time monitoring of distribution networks becomes increasingly pivotal in order to ensure reliable and optimal operation of the grid.

The DSSE task is often performed by formulating a weighted least squares (WLS) optimization problem~\cite{baran1994, li1996state, singh2009choice, kekatos2013distributed,  Wang2018}. In~\cite{baran1994, li1996state}, a WLS-based DSSE solver was proposed using a three-phase nodal voltage formulation. Recently, a Wirtinger calculus-based approach was devised in~\cite{dzafic2018hybrid} to solve the problem in the complex domain to reduce computational complexity. Another DSSE solver based on the branch current formulation was proposed in~\cite{baran1995branch, wang2004revised} which reduces the computational complexity of the problem when the system features only solidly-grounded wye-connected loads. All these solvers rely solely on physics-based models which usually lead to nonconvex optimization problems that are computationally expensive. 

Exploiting valuable information from abundant real-time and historical data, data-driven approaches hold the promise to significantly enhance monitoring accuracy and improve the performance of distribution networks. To that end, neural network approaches have been used to estimate the bus injections from the real-time measurements in~\cite{manitsas2012distribution}. The estimated bus injections can be used as pseudo-measurements to compensate for the scarcity of real-time measurements. In addition, plain feed-forward neural networks (NN) were proposed to estimate the network state from the measurements in~\cite{Barbeiro2014}. This approach reduces the complexity of the state estimation task to matrix-vector multiplications by shifting the computational burden to an off-line training stage utilizing historical or simulated data. It is often challenging to avoid exploding or vanishing gradients while training these feed-forward NNs, and thus the provided estimates are less accurate than any optimization-based approach. 
A joint optimization/learning approach was proposed in~\cite{Zamzam-2018}. Since GN works very well when given a proper initialization, the key is to {\em learn to initialize} a Gauss-Newton solver. This entails a special design of the learning cost function, but in turn a shallow NN suffices to learn to initialize, keeping sample complexity and run-time complexity low, while benefiting from the high accuracy of properly initialized GN. Different from~\cite{Zamzam-2018} the authors of~\cite{zhang2018real} devised a learning approach where a deep NN is constructed by unfolding an iterative solver for the
least-absolute-value formulation of the state estimation problem in transmission networks~\cite{wang2017robust}. 

All past learning models for state estimation overlook the physics of the underlying distribution network, hence leading to over-parameterization of the mapping from the measurements to the network states.
In order to utilize our knowledge of the physical system that governs the relationship between the network states and the measurements, we propose a novel neural network architecture exploiting the distribution network structure. We start by showing that owing to disparity in the accuracy of the measurements, i.e., the $ \mu $-phasor-measuring unit ($ \mu $PMU) measurements are far more accurate than any other measurements in the network, the DSSE problem can be (approximately) partitioned into smaller problems. Therefore, the estimation of the state (voltage) at a certain bus in the network can be done using the measurements taken at the partition/partitions where this bus is located. In this paper, we formally describe the partitioning of the DSSE problem that results from installing $ \mu $PMUs in the distribution network. In addition, the quality of the partitioning resulting from a certain installation of $ \mu $PMUs in the network can be assessed using the diameter measure which is related to the size of partitions generated. We propose a greedy algorithm for installing $ \mu $PMUs in the distribution network to minimize the diameter of the resulting partitioning. Simulations results on the IEEE-37 distribution feeder show that the proposed learning approach achieves superior performance in terms of estimation accuracy. Also, the running time is in the order of milliseconds which allows for real-time monitoring of the distribution network.

The main idea of the proposed NN architecture is to zero out the weights of the measurements taken outside a particular partition when calculating the estimate of the network states at the buses inside that partition. To do so, the structure of the network admittance matrix is embedded on the weights matrix that maps the NN iterates. 
{The uderlying physical model that governs the operation of the distribution network is utilized to sparsify the learning model,} and thus the proposed model is called \emph{Physics-aware neural network} (PAWNN) where the pruning is done in a deterministic manner before training. In the proposed architecture, the output of a $ K $-layer PAWNN that relates to the estimated voltage at a certain bus is only a function of the measurements taken at most $ K $ hops away from this bus. Therefore, in order to realize the mapping from the measurements to the states, the number of layers in the PAWNN has to match the diameter of the partitioning resulting from the $ \mu $PMUs placement. In our simulations, we show that the proposed greedy algorithm for $ \mu $PMUs placement achieves near optimal performance in terms of minimizing the diameter of the resulting partitioning.

{The proposed NN architecture reduces the number of the trainable parameters, which prevents over-fitting. Also, it inherently provides robustness as any reconfiguration in the network topology will only affect the state estimates in this specific directly affected areas. For example, if the neural network has $K$ layers, any measurement contributes only to the state estimates at buses that are at most $ K $ hops away from the location where the measurement is taken. Similarly, for the case of measurement outliers resulting from damaged meters or communication failures, the estimation of the state at distant locations from the outliers measurements will not be affected.}

A different approach was introduced in~\cite{gao2018large}. A learnable graph convolutional NNs model was proposed where an automatic selection of a fixed number of neighboring nodes for each feature based on value ranking in order to transform graph data into grid-like structures in 1-D format, thereby enabling the use of regular convolutional neural networks. However, this approach can not be directly applied to the state estimation problem as the distribution of the measurements is usually not uniform across the nodes.

The rest of this paper is structured as follows. Section II outlines the DSSE formulation. Section III shows the partitioning of the DSSE problem based on the $ \mu $PMU placement. Section IV introduces our novel PAWNN architecture, and presents our algorithm for $ \mu $PMU placement. Simulated tests are presented in Section V, and the paper is concluded in Section VI. 

\noindent{\bf Notation}: sets are denoted by calligraphic letters, and matrices (vectors) are denoted by boldface capital (small) letters; $(\cdot)^T$, $ \overline{(\cdot)} $ and $(\cdot)^H$ stand for transpose, complex-conjugate and complex-conjugate transpose, respectively; and $|(\cdot)|$ denotes the magnitude of a number or the cardinality of a set.

\section{Distribution System State Estimation Problem}
We consider a multi-phase distribution feeder consisting of $ N $ buses and $ L $ lines that can be modeled as a graph $ \mathcal{G} := (\mathcal{N}, \mathcal{L}) $, where $ \mathcal{N}:=\{1, 2, \ldots, N\} $ comprises all the buses, and $ \mathcal{L} \subseteq \mathcal{N} \times \mathcal{N} $ represents the lines in the network. Let $ {\bf v}_n = [v_{n,a}, v_{n,b}, v_{n,c}]^T$ represent the voltage at all the phases of bus $ n $. Then, define $ {\bf v} := [{\bf v}_1^T, {\bf v}_2^T 	, \ldots, {\bf v}_n^T]^T $ which collects the voltages at all the buses $ n\in\mathcal{N} $. For each line $ (l, m)\in \mathcal{L} $, let $ {\bf Z}_{lm} = {\bf Y}_{lm}^{-1} $ denote the phase impedance matrix, and let $ \overline{\bf Y}_{lm} $ denote the shunt admittance matrix in the $ \pi $-equivalent model.

The DSSE problem aims to recover the system state vector $ {\bf v} \in \mathbb{C}^{3N}$ from real-time measurements, and pseudo-measurements. Due to the scarcity of real-time measured quantities, pseudo-measurements which relate to forecasted loads and renewable generation are used as surrogates. Naturally, the measurement noise level of the pseudo-measurements is higher than the noise level corresponding to the real-time measured quantities.

Advanced metering infrastructure, supervisory control and data acquisition (SCADA), and $ \mu $PMUs that are placed at some locations in the distribution network provide real-time measurements. The measured quantities are modeled as
\begin{equation}\label{eq:meas1}
\tilde{z}_\ell = \tilde{h}_\ell({\bf v}) + w_\ell, \qquad 1 \leq \ell \leq L_m
\end{equation}
where $ w_\ell $ accounts for the measurement noise and the modeling inaccuracies. We assume that $ w_\ell $ is a zero-mean Gaussian noise with known variance of $ \tilde{\sigma}_\ell^2 $.
The functions $ \tilde{h}_\ell({\bf v}) $ are the measurement synthesizing functions, and can be either linear or quadratic relationships. Later, we will discuss the specific form of $ \tilde{h}_\ell({\bf v}) $. 
Additionally, pseudo-measurements are obtained by utilizing load and renewable generation forecasting methods which can help in enhancing the observability of the system states. The forecasted quantities are usually noisy and adhere to 
\begin{equation}\label{eq:meas2}
\check{z}_\ell = \check{h}_\ell ({\bf v}) + u_\ell, \qquad 1 \leq \ell \leq L_s
\end{equation}
where $ u_\ell $ amounts for the  zero-mean forecast error which is assumed to have known variance $ \check{\sigma}_\ell^2 $. The forecasted quantities $ \check{z}_\ell ${'s} are power-related, and hence, they can be modeled as quadratic functions of the state variable $ {\bf v} $. The value of the measurement noise variance {$ \tilde{\sigma}_\ell^2 $} 
is determined by the accuracy of the measuring equipment and modeling errors, while the variance of the forecast error can be obtained through historical forecast data.

Define $ {\bf z}$ to be a vector comprising all the real-time measurements and pseudo-measurements, and let $ {\bf h}({\bf v}) : \mathbb{C}^{3N}\rightarrow \mathbb{R}^{L_m+L_s} $ be the mapping from the voltage (state) vector $ {\bf v} $ to the measurements.
The weighted least-squares formulation of the DSSE problem can be cast as follows
\begin{align} \label{eq:psse}\nonumber
\min_{\bf v}\ J({\bf v}) &= \sum_{\ell = 1}^{L_m} \tilde{w}_\ell \big(\tilde{z}_\ell - \tilde{h}_\ell ({\bf v})\big)^2 + \sum_{\ell = 1}^{L_s} \check{w}_\ell \big(\check{z}_\ell - \check{h}_\ell ({\bf v})\big)^2\\
&= \ ({\bf z} - {\bf h}({\bf v}))^T {\bf W} ({\bf z} - {\bf h}({\bf v}))
\end{align}
where the values of $ \tilde{w}_\ell $ and $ \check{w}_\ell $ are inversely proportional to $ \sigma_\ell^2 $ and $ \check{\sigma}_\ell^2 $, respectively. The optimization problem~\eqref{eq:psse} is {non-convex} due to the nonlinearity of the measurement mappings $ {\bf h}({\bf v}) $ inside the squares. 

	
The measurement functions $ \check{h}({\bf v}) $ and $ \tilde{h}({\bf v}) $ consist of:
	
\begin{itemize}
	\item {\it phasor measurements} which comprise the complex nodal voltages $ {\bf v}_n $, and/or current flows $ {\bf i}_{lm} $ and are obtained using the $ \mu $PMUs. Therefore, the corresponding measurement synthesizing function is linear in the state variable $ {\bf v} $. Each complex measurement is represented as two real measurements, i.e., the real and imaginary parts of the measured complex quantity. For example, the real and imaginary part of the complex nodal voltage at bus $ n $ for phase $ \phi $ are given by the following measurement synthesizing functions
	\begin{equation}
	\Re\{v_{n,\phi}\} =\ \frac{1}{2}\ {\bf e}_{\phi}^T\ ({\bf v}_n + \overline{\bf v}_n),
	\end{equation}
	\begin{equation}
	\Im\{v_{n,\phi}\} =\ \frac{1}{2j}\ {\bf e}_{\phi}^T\ ({\bf v}_n -  \overline{\bf v}_n)
	\end{equation}
	where $ {\bf e}_{\phi} $ is the $ \phi $-th canonical basis in $ \mathbb{R}^{3} $. Similarly, the real and imaginary parts of the complex current flow measurements can be written as
	\begin{equation}
	\Re\{i_{lm,\phi}\} = \frac{1}{2}\ {\bf e}_{\phi}^T\ \big( {\bf Y}_{lm} ({\bf v}_l - {\bf v}_m) +  \overline{\bf Y}_{lm} (\overline{\bf v}_l  - \overline{\bf v}_m ) \big),
	\end{equation}
	\begin{equation}
	\Im\{i_{lm,\phi}\} = \frac{1}{2j}\ {\bf e}_{\phi}^T\ \big( {\bf Y}_{lm} ({\bf v}_l - {\bf v}_m) -  \overline{\bf Y}_{lm}  (\overline{\bf v}_l  - \overline{\bf v}_m ) \big).
	\end{equation}
	\item {\it real-valued measurements} which include voltage magnitudes $ |{v}_{n,\phi}| $, current magnitudes $ |i_{lm,\phi}| $, and real and reactive power flow and injection measurements $ p_{lm,\phi}, q_{lm,\phi}, p_{n, \phi}, q_{n, \phi} $. These measurements are usually acquired by advanced metering infrastructure (AMI), SCADA systems, or $ \mu $PMUs. The real-valued measurements are nonlinearly related to the state variable $ {\bf v} $. The measured voltage magnitude square, the current magnitude square, and active and reactive power flows can be represented as quadratic functions of the state variable $ {\bf v} $, see~\cite{DallAnese13, Zamzam-2018}. Hence, all the real-valued measurements can be written as quadratic measurements of the state variable $ {\bf v} $.
	\item {\it pseudo-measurements} can obtained through load and renewable energy generation forecast methods which aim to estimate these quantities exploiting historical data and locational information. These measurements are often less accurate than real-time measurements, and hence, low weights are assigned to their corresponding terms in the WLS formulation. Like the real-valued measurements, the functions governing the mapping from the state variable to the forecasted load and renewable energy source injections can be formulated as quadratic functions~\cite{DallAnese13, Zamzam-2016}. 
\end{itemize}

	
Therefore, any measurement synthesizing function $ h_\ell({\bf v}) $ can be written in the following form
\begin{equation}
	h_\ell ({\bf v}) = {\overline{\bf v} }^T {\bf D}_\ell {\bf v} + {\bf c}_\ell^T {\bf v} + {\overline{\bf c}_\ell }^T \overline{\bf v} 
\end{equation}
where $ {\bf D}_\ell $ is a Hermitian matrix. This renders $ J({\bf v}) $ a fourth order function of the state variable, {which is} very challenging to optimize.


\section{Partitioned DSSE}
This section presents the required background for the partitioning of the DSSE problem that results from installing $ \mu $PMUs that provide very accurate measurements. First, we introduce the vertex-cut partitioning which divides the edges of the graph into disjoint sets. Then, we show that installing $ \mu $PMUs in the network results in a vertex-cut partitioning of the DSSE problem. Throughout this section, we use vertex, node, and bus interchangeably. Similarly, we use edge and line to refer to any connection in the graph.
\label{sec:PDSSE}
\begin{definition}
	An \emph{articulation vertex} of a connected graph is a vertex whose removal disconnects the graph~\cite[\S 2.4]{Chartrand-1985}.
\end{definition}

According to the definition, all the vertices in a tree graph are articulation points since removing any vertex disconnects the graph. Next, we define vertex-cut partitioning which partitions the set of the edges in the graph into multiple disjoint subsets.
\begin{definition}
	A {\em vertex-cut partitioning} refers to a partitioning of the edge set $ \mathcal{L} $ into $ K $ subsets $ \mathcal{L}_k $, such that $ \mathcal{L}_k \in \mathcal{L} $, $ \cup_{1\leq k\leq K} \  \mathcal{L}_k = \mathcal{L}$, and $ \mathcal{L}_k \cap \mathcal{L}_{k'} = \phi $ for $ k \neq k' $. Any vertex that holds an endpoint of an edge $ (l, m) \in \mathcal{L}_k $ is in $ \mathcal{N}_k $.
\end{definition}
Examples of vertex-cut partitioning are depicted in Fig.~\ref{fig:v-c}. If the original graph is a tree, then the number of disjoint subsets of edges that result from choosing a vertex to be cut is equal to the number of edges connected to that vertex. In Fig.~\ref{fig:v-c}(a) and (b), we show the two graphs resulting from cutting a vertex that has two edges in the original graph. When the vertex to be cut has three edges as in Fig.~\ref{fig:v-c} (c), the number of the resulting subgraphs is three where the cut-vertex is replicated in all the subgraphs. 

\begin{figure*}[t!]
	\centering
	\begin{subfigure}[b]{0.23\textwidth}\
		\centering
		\includegraphics[height=1.2in]{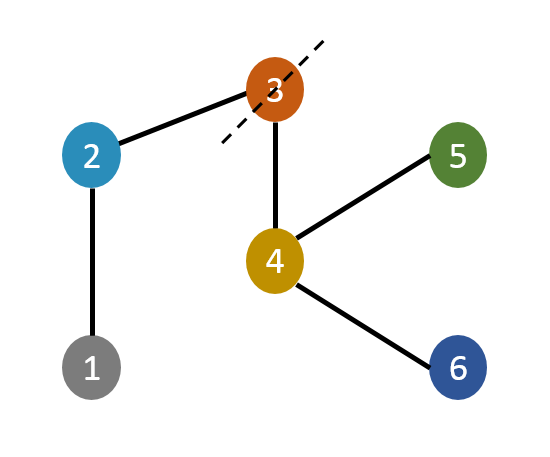}
		\caption{}
		\label{fig:g1}
	\end{subfigure}%
	~ 
	\begin{subfigure}[b]{0.23\textwidth}
		\centering
		\includegraphics[height=1.2in]{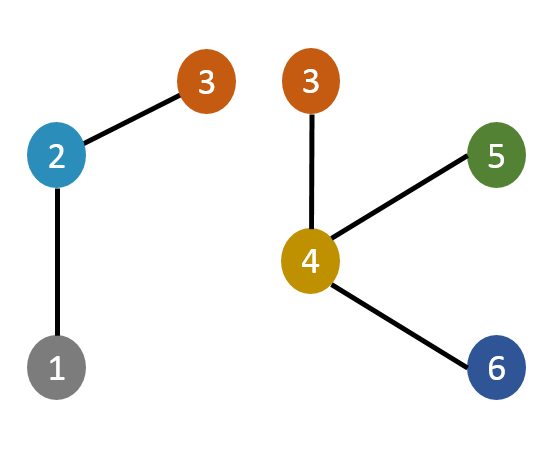}
		\caption{}
		\label{fig:v-c1}
	\end{subfigure}
	~
	\begin{subfigure}[b]{0.23\textwidth}
		\centering
		\includegraphics[height=1.2in]{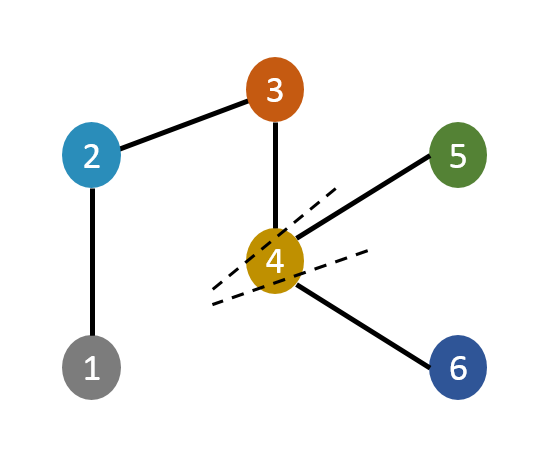}
		\caption{}
				\label{fig:g2}
	\end{subfigure}%
	~ 
	\begin{subfigure}[b]{0.23\textwidth}
		\centering
		\includegraphics[height=1.2in]{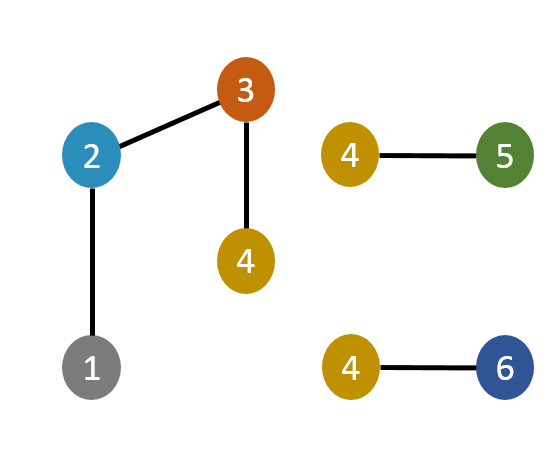}
		\caption{}
		\label{fig:v-c2}
	\end{subfigure}
	\caption{Vertex-cut partitioning Examples over a tree graph. (a) An articulation point with two edges, (b) Resulting vertex-cut partitioning from Fig.~\ref{fig:g1}, (c) An articulation point with three edges, and (d) Resulting vertex-cut partitioning from Fig.~\ref{fig:g2}.}
	\label{fig:v-c}
\end{figure*} 

For the purpose of our mathematical proof, we adopt the following assumption on the accuracy of the $ \mu $PMU measurements.
\begin{assumption}
	The $ \mu $PMU measurements are noiseless, i.e., the variance of the measurement noise associated with the $ \mu $PMU measured quantities is negligible.
\end{assumption}
This assumption is quite realistic as the signal to noise ratio in the $ \mu $PMU measurements is in the range of $ 40$ to $50 $ dB~\cite{Brown-2016, power_standards_lab_2018}. \ahmed{The errors in the instrumentation channel, e.g., CTs, PT, and control cables, are larger but stable which allows for filtering these errors utilizing consecutive measurements~\cite{farajollahi2018locating}.} 
Note that, this assumption is only used for our mathematical proof, but not assumed or invoked in any of the simulations. 

Estimating the voltages (states) of all the buses in the network is usually done by solving an optimization problem as in~\eqref{eq:psse}. Abstracting this concept, we can say that there is a mapping $ {\bf F}(\cdot) $ such that $ {\bf F}({\bf z}) = \hat{\bf v} $. Suppose $ \mathcal{P}\subset \mathcal{N} $ is the set that comprises the buses with $ \mu $PMUs installed. Since the $ \mu $PMU measurements are noiseless, the state estimation problem can be reduced to estimating the network voltages (states) only at the buses without $ \mu $PMUs ($ \mathcal{N}\backslash \mathcal{P} $). Such mapping is denoted by $ {\bf F}_r({\bf z}) = \hat{\bf v}_{\mathcal{N}\backslash \mathcal{P} } $ where $ {\bf v}_{\mathcal{N}\backslash \mathcal{P} } $ collects the voltages at all the nodes without $ \mu $PMUs. The next theorem shows the separability of this mapping over the partitioned graph.
\begin{theorem}
	\label{th:one}
	Suppose $ \{\mathcal{L}_k\}_{k=1}^{K} $ are the disjoint partitions that result from cutting the vertices in $ \mathcal{P} $. In addition, $ \mathcal{N}_k $ denotes the set of nodes connected to the edges in $ \mathcal{L}_k $, and $ \overline{\mathcal{N}}_k = \mathcal{N}_k \backslash \mathcal{P}$. Then, the mapping $ {\bf F}_r({\bf z}) = {\bf v}_{\mathcal{N}\backslash \mathcal{P}} $ is separable over the vertex-cut partitioning, i.e., for each set $ \overline{\mathcal{N}}_k $, the mapping $ {\bf F}_r(\cdot) $ can be written as $ {\bf F}_r^{(k)}({\bf z}^{(k)}) = \hat{{\bf v}}_{{\overline{\mathcal{N}}}_k} $, where $ {\bf z}^{(k)} $ comprises all the measurements taken at the buses $ \mathcal{N}_k $ and the edges $ \mathcal{L}_k $, and $ \hat{\bf v}_{\overline{\mathcal{N}}_k} $ collects the voltages at the buses in $ \overline{\mathcal{N}}_k $.
\end{theorem}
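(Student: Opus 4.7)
The plan is to proceed in two stages. First, I would use the noiseless $\mu$PMU measurements to pin down the boundary voltages $\mathbf{v}_n$, $n\in\mathcal{P}$, and thereby reduce~\eqref{eq:psse} to an optimization over $\mathbf{v}_{\mathcal{N}\setminus\mathcal{P}}$ only. Second, I would show that the reduced WLS cost decomposes across the vertex-cut partitions, so that its minimization splits into $K$ independent subproblems, each defining one mapping $\mathbf{F}_r^{(k)}$. The fixing step is justified by Assumption~1: every $\mu$PMU-based synthesizing function $\tilde{h}_\ell$ is linear in $\mathbf{v}$ and carries negligible noise variance (equivalently, its weight in~\eqref{eq:psse} tends to infinity), so any minimizer must satisfy $\tilde{h}_\ell(\mathbf{v})=\tilde{z}_\ell$ exactly; taking all the $\mu$PMU voltage measurements at $n\in\mathcal{P}$ together uniquely determines $\mathbf{v}_n$.

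For the decomposition step, I would verify that each remaining summand of $J(\mathbf{v})$ depends only on voltages lying in a single $\mathcal{N}_k$. For any flow-type measurement on a line $(l,m)\in\mathcal{L}_k$, the synthesizing function involves only $\mathbf{v}_l$ and $\mathbf{v}_m$, which belong to $\mathcal{N}_k$ by construction. For any voltage-magnitude or injection measurement at a bus $n\in\overline{\mathcal{N}}_k$, the critical observation is that $n\notin\mathcal{P}$ and hence $n$ is not a cut vertex, so every edge incident to $n$ lies in the same $\mathcal{L}_k$; consequently, the voltages of $n$ and of all of its neighbors sit inside $\mathcal{N}_k$. Grouping the summands by partition then yields $J(\mathbf{v}) = \sum_{k=1}^{K} J_k\!\left(\mathbf{v}_{\overline{\mathcal{N}}_k};\mathbf{z}^{(k)}\right)$, and since the variable blocks $\mathbf{v}_{\overline{\mathcal{N}}_1},\ldots,\mathbf{v}_{\overline{\mathcal{N}}_K}$ are pairwise disjoint, the WLS minimization splits into $K$ independent problems that define the claimed separable mappings.

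The step I expect to be the main obstacle is the treatment of injection-type measurements located at articulation buses $n\in\mathcal{P}$, since the measured net power or current at such a bus physically sums contributions from edges that may belong to different $\mathcal{L}_k$. I would handle this by exploiting the fact that $\mathbf{v}_n$ is already fixed by the $\mu$PMU, so the corresponding synthesizing function at $n$ becomes affine in the remaining neighbor voltages and splits additively as a sum over the lines incident to $n$; each line-level term can then be attributed to its own $\mathcal{L}_k$ (or equivalently, the scalar measurement can be shared across the $\mathbf{z}^{(k)}$ that abut $n$ without coupling the residual decision variables). Making this line-level decomposition precise, together with rigorously justifying the infinite-weight limiting argument invoked in the first step, is where I expect the real technical work of the proof to lie.
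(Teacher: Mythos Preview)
Your proposal is correct and follows essentially the same line as the paper: use Assumption~1 to fix the voltages at $\mathcal{P}$, then invoke the locality of every measurement synthesizing function (each depends only on the state at a bus and its neighbors, or at the two endpoints of a line) to conclude that the reduced estimation problem decouples over the vertex-cut partitions. Your treatment is in fact more careful than the paper's brief argument, which neither formalizes the infinite-weight limit nor addresses the cut-vertex injection subtlety you identify in your last paragraph.
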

\begin{proof}
	Any measurement synthesizing function $ h_\ell({\bf v}) $ taken at a bus $ n $ is a function of the state at the bus $ n $ and the buses connected to bus $ n $. Similarly, for a current or power flow measurement taken at an edge $ (l, m) $, the synthesizing function is a function only of the state at the buses $ l $ and $ m $. 
	As the $ \mu $PMUs provide exact complex voltage measurements at the buses $ \mathcal{P} $, any measurement taken at a bus $ n \in \mathcal{N}_k$ on a line $ (l, m) \in \mathcal{L}_k $ depends only on the state at the buses $ \overline{\mathcal{N}}_k $. Therefore, the measurement synthesizing function of any measurement taken outside a certain partition $ \mathcal{L}_k $ does not involve the states (voltages) at the nodes $ \overline{\mathcal{N}}_k $. Hence, the state estimation mapping for $ \hat{v}_{\overline{\mathcal{N}}_k} $ is function only of the measurements $ {\bf z}^{(k)} $, which proves the theorem.
\end{proof}

The aforementioned theorem provides an insight regarding the separability of the state estimation problem in presence of $ \mu $PMUs. This separability is critical when a learning model is used to estimate the state of the network from the measurements. It is clear now that a learning model that estimates the voltage at a certain bus does not require knowledge of all the measurements in the network. Instead, by having an accurate measurement of the state (voltage) at a certain bus, all the measured quantities behind this bus can be discarded. This will play an important role in reducing the complexity of our learning model used for the task.

\section{Graph-Pruned Neural Networks for DSSE}
In this section, we present our novel learning model for DSSE. The graph-pruned neural network is composed of multiple layers whose connections reflect the distribution network connections. Let the input of the NN be denoted by $ {\bf x} $, and the NN produces an output $ {\bf y} $ using a stacked layered architecture in which each layer realizes a linear transformation and a point-wise nonlinearity. The vector $ {\bf y} $ is partitioned into $ N $ parts that represent features of each node in the graph, e.g., the voltage at the buses. The intermediate output at the $ t $-th layer of the NN is denoted by $ {\bf h}_t \in \mathbb{R}^{Nd_t}$ where $ d_t $ represents the dimension of each partition in $ {\bf h}_t $. Formally, the $ t $-th layer output is computed using the following transformation
\begin{equation}
{\bf h}_{t+1} = \sigma_l ( {\bf W}_t {\bf h}_t )
\end{equation}
where $ \sigma_t $ is a point-wise nonlinearity, and the matrix $ {\bf W}_t \in \mathbb{R}^{Nd_{t+1}\times Nd_t} $ is composed of $ N \times N $ blocks of size $ (d_{t+1} \times d_t) $. The $ (i, j) $ block in the matrix $ {\bf W}_t $ is zeroed out (pruned) if the nodes $ i $ and $ j $ are not connected in $ \mathcal{G} $, which justifies the name of the proposed learning model as \emph{graph-pruned} NN.

\begin{figure*}[t!]\label{fig:GPNN}
	\centering
	\begin{subfigure}{0.33\textwidth}
		\centering
		\includegraphics[height=2in]{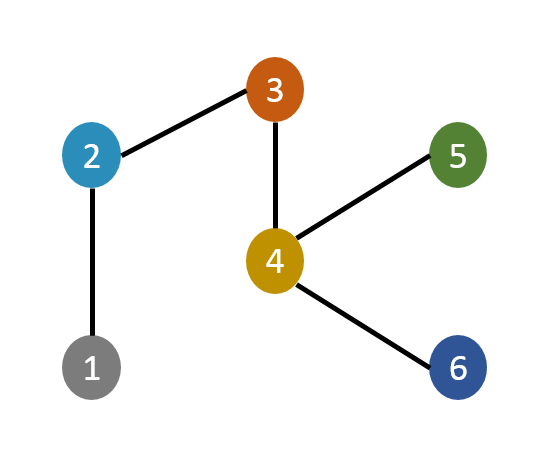}
		\caption{An example graph}
		\label{fig:example}
	\end{subfigure}~
	\begin{subfigure}{0.33\textwidth}
		\centering
		\includegraphics[height=2in]{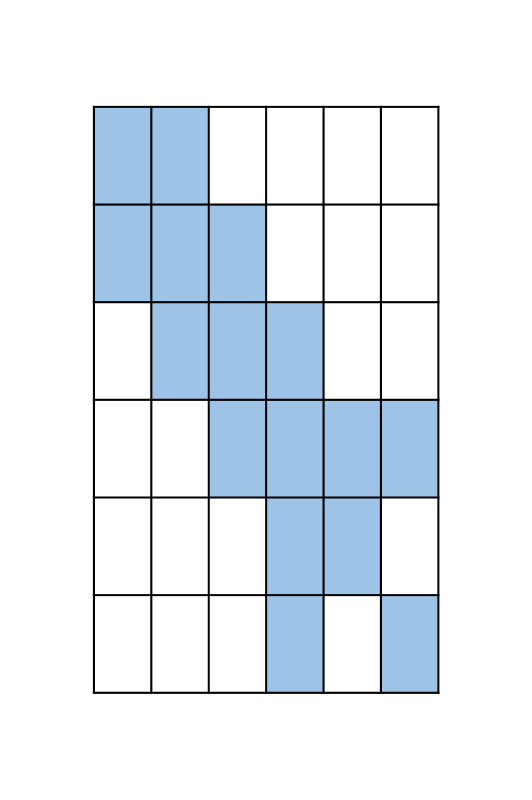}
		\caption{The sparsity of the weights}
		\label{fig:W}
	\end{subfigure}~
	\begin{subfigure}{0.33\textwidth}
		\centering
		\includegraphics[height=2in]{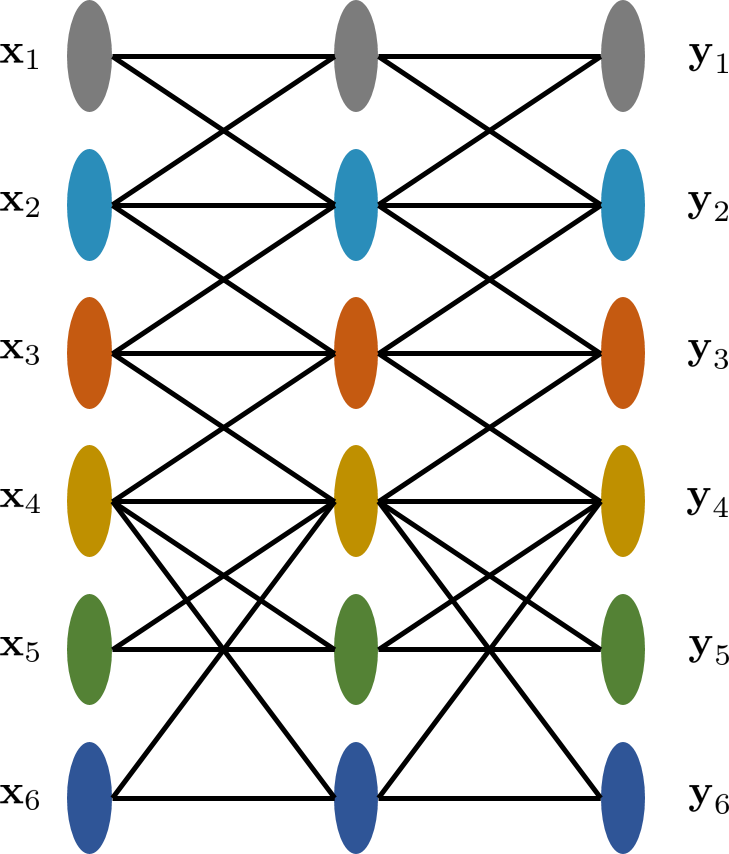}
		\caption{Graph-pruned NN for the graph in Fig.~\ref*{fig:example}}
		\label{fig:GNN}
	\end{subfigure}
	\caption{Example graph with the corresponding graph-pruned NN}
\end{figure*} 

\ahmed{\begin{example}
	Consider the graph in Fig.~\ref{fig:example}. Suppose a two-layer graph-pruned NN is designed to estimate some features of the nodes in the graph from signals (measurements) at the network nodes. The input vector $ {\bf x} := [ {\bf x}_1^T,\ {\bf x}_2^T,\ \ldots,\ {\bf x}_6^T ]^T$, and the output vector $ {\bf y} $ is also composed of six components $ {\bf y}_i $ for $ 1 \leq i \leq 6 $. The output of the NN can be written as
	$$ {\bf y} = \sigma_2 \big( {\bf W}_2 \ \sigma_1 ( {\bf W}_1 {\bf x} + {\bf b}_1) + {\bf b}_2\big)
	$$
	where the structure of the weight matrices $ {\bf W}_i $ is depicted in Fig.~\ref{fig:W}. In addition, let $ {\bf h}_{t,i} $ denote the $ i $-th block of the output of the $ t $-th layer in the NN. Fig.~\ref{fig:GNN} shows the dependency of the block of each layer in the NN on the block of the previous layer. For example, $ {\bf h}_{1, 2} $ is function of $ {\bf x}_1$ and ${\bf x}_2 $ only. Notice that since the network is composed of two layers, each output block $ {\bf y}_i $ is function of the inputs related to nodes that are at most two hops away in the graph. For example, $ {\bf y}_4 $ is function of all inputs except $ {\bf x}_1 $, meaning that any change in $ {\bf x}_1 $ will not affect the output ${\bf y}_4 $ as long as only two layers are used. 
\end{example}
}

The graph convolutional NN (GCNN)~\cite{kipf2016semi, ioannidis2018recurrent} learning approaches are designed to process data defined over graphs. These models also lead to sparsification of the weight matrices connecting the hidden layers in the neural network. In addition to being derived from the physical model governing the operations, the proposed learning model leads to a more general parameterization.
For instance, the GCNN model proposed in~\cite{kipf2016semi} is tantamount to enforcing the blocks of the weight matrices to be scaled versions of a single matrix. For instance, if the small gray blocks in Fig.~\ref{fig:W} are constrained to be scaled versions of each other, then the proposed method becomes equivalent to the GCNN in~\cite{kipf2016semi}. On the other hand, if the vertically aligned blocks in the weight matrices of PAWNN are chosen to be scaled versions of a fixed matrix, then the learning model of~\cite{ioannidis2018recurrent} emerges as a special case. For example, for the PAWNN in Fig.~2, if we constrain the blocks $ (3, 4) $, $ (5, 4) $, and $ (6, 4) $ of the weights matrix in Fig.~\ref{fig:W} to be scaled versions of the block $ (4, 4) $, and similarly for all other vertically aligned blocks, then the model reduces to the graph NN proposed in~\cite{ioannidis2018recurrent}.

\subsection{Required number of layers}
As established in Section~\ref{sec:PDSSE}, the DSSE problem can be partitioned by installing $ \mu $PMUs that essentially break the dependencies of the estimated state at a certain bus on any measurement taken outside its partition. Therefore, the graph-pruned NN has the potential to realize mappings such that the estimated state at a certain bus is a function only of the measurements taken in the same partition. In order to characterize the number of layers required to realize the DSSE mapping for a network with $ \mu $PMUs installed, we introduce the following definitions.

\begin{definition}
	The \emph{eccentricity} of a vertex $ v $ in $ \mathcal{G}({\mathcal{N},\mathcal{L}}) $ is the maximum shortest path length from $ v $ to all vertices in $ \mathcal{N} $.
\end{definition}
\vspace{-10pt}
\begin{definition}
	The \emph{diameter} of a graph $ \mathcal{G}({\mathcal{N},\mathcal{L}}) $ is the maximum eccentricity of all the vertices in $ \mathcal{N} $.
\end{definition}
\vspace{-10pt}
\begin{definition}
	The \emph{diameter} of a vertex-cut partitioning is the maximum diameter of the subgraphs $ \mathcal{G}_k(\mathcal{N}_k, \mathcal{L}_k) $ for $ 1 \leq k \leq K $, which is introduced by cutting the vertices in a set $ \mathcal{P} \subseteq \mathcal{N} $, and it is denoted by $ \text{dia}(\mathcal{P}) $.
\end{definition}

For example, the diameter of the vertex-cut partitioning in Fig.~\ref{fig:v-c1} is $ 2 $, while the diameter of the vertex-cut partitioning in Fig.~\ref{fig:v-c2} is $ 3 $. Therefore, a two-layer graph-pruned NN, as in Fig.~\ref{fig:GNN}, can penitentially approximate the mapping between the measurements and the states if a $ \mu $PMU unit is installed at the vertex that was cut in Fig.~\ref{fig:v-c1}. However, with the cut in Fig.~\ref{fig:v-c2}, at least three-layer graph-pruned NN is needed. Now, it is reasonable to ask how to place available $ \mu $PMUs such that the diameter of the resulting vertex-cut partitioning is minimized. In the next subsection, we present a greedy algorithm that provides a simple approximate solution for this problem.

\subsection{Greedy algorithm for $ \mu $PMU placement}
As shown in Theorem~\ref{th:one}, the placement of $ \mu $PMUs in the distribution network makes the DSSE problem separable. In the experiments section, we will show that under realistic setup the decoupled DSSE subproblems are almost equivalent to the original formulation. Therefore, the $ \mu $PMUs need to be placed in the feeder such that the resulting subproblems are balanced. In other words, we tackle the problem of minimizing the diameter of the resulting vertex-cut partitioning given a certain budget of $ \mu $PMUs. We present a greedy algorithm that provides an approximate solution of this placement problem.

The proposed greedy algorithm tackles the problem of installing $ \mu $PMUs one at a time. It is clear that the optimal placement of a $ \mu $PMU in order to reduce the diameter of the resulting vertex-cut partitioning is to install it in the middle of the longest shortest path in all the partitions. Therefore, our algorithm starts by finding the maximum length shortest path in the network, and the $ \mu $PMU is installed in the middle of this path. Then, the process continues by finding the maximum length shortest path in all the resulting subgraphs, and then placing the next $ \mu $PMU along the maximum length path in all the subgraphs. The process continues until the available budget of $ \mu $PMUs is exhausted. Algorithm~\ref{Alg:greedy} summarizes the main steps of the proposed approach.

\SetKw{input}{Input: }
\SetKw{output}{Output: }
\SetKw{init}{Initialization: }
\DontPrintSemicolon
\begin{algorithm}[h]
	\caption{Greedy Algorithm for $ \mu $PMU Placement}
	{\footnotesize
		\input{\rm graph $ \mathcal{G}(\mathcal{N}, \mathcal{L}) $ and $ K \geq 1 $ budget of $ \mu $PMUs}\;
		\output{\rm set $ \mathcal{S}\subseteq\mathcal{N} $ where $ |\mathcal{S}| = K $}\;
		\init{\rm $ \mathcal{S} = \phi$}\;
		\Repeat{$|\mathcal{S}| = K $}{
			[S1]	Determine the maximum length shortest path in all the subgraphs $ \mathcal{G}_k(\mathcal{N}_k, \mathcal{L}_k) $ resulting from cutting the vertices in $ \mathcal{S} $\;
				\vspace{.2cm}		
			[S2]	Place a $ \mu $PMU in the middle of the longest path identified in [S1] \;
			\vspace{.2cm}	
		}
	}	
	\label{Alg:greedy}
\end{algorithm}

In order to find maximum length shortest path in a tree a simple algorithm is used. Let us consider a subgraph $ \mathcal{G}_k(\mathcal{N}_k, \mathcal{L}_k) $. First, we choose a random starting point $ n\in \mathcal{N}_k $ and perform depth-first search (DFS) to find the eccentricity of $ n $ which is achieved for the path from $ n $ to vertex $ n' \in \mathcal{N}_k$. Now, we use DFS to find the eccentricity of node $ n' $. The length of the path achieving maximum shortest path length from $ n' $ is the diameter of $ \mathcal{G}_k $. Therefore, the complexity of each step is $ \mathcal{O}(|\mathcal{N}|) $, and the process is repeated $ K $ times. Hence, the total complexity of Algorithm~\ref{Alg:greedy} is $ \mathcal{O}(K|\mathcal{N}|) $.

\section{Experimental Results}

	\begin{figure}[t]
		\centering
		\includegraphics[width=0.85\columnwidth]{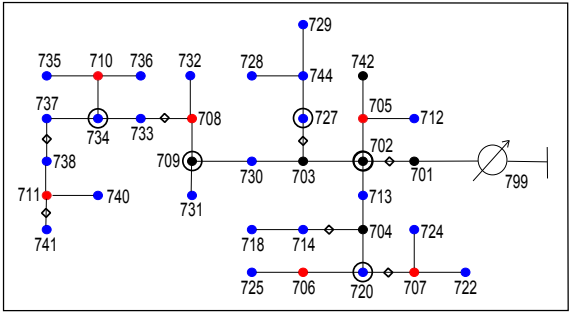}
		\caption{IEEE-37 distribution feeder. Nodes in blue are with loads, and red nodes represent buses with DER installed.}
		\label{fig:Network}
		\vspace{-13pt}
	\end{figure}
In this section, the proposed graph-pruned NN is utilized to estimate the state of the benchmark IEEE-37 distribution feeder. {This network was recommended by the Test Feeder Working Group of the Distribution System Analysis Subcommittee of the IEEE PES for evaluating the performance of the state estimation algorithms~\cite{Feeder-WG}.} The feeder has several delta-connected loads and is known to be highly unbalanced. The load buses are blue-colored in Fig.~\ref{fig:Network}. In addition, some nodes in the feeder feature different types of connections, i.e., single-, two-, and three-phase connections. Renewable energy sources (RES) are installed at six different buses, which are colored in red in Fig.~\ref{fig:Network}. \ahmed{In addition, the buses with delta-connected loads are colored in blue in Fig.~\ref{fig:Network}.}


We evaluate the performance of the proposed greedy algorithm for placing $ \mu $PMUs in the distribution feeder. We compare the diameter of the partitioning induced by our proposed approach against the one resulting from optimal placement of $ \mu $PMUs. For the optimal placement, exhaustive search is used in order to identify the placement that minimizes the diameter of the resulting vertex-cut partitioning. \ahmed{As the objective of this optimal placement is to minimize the diameter of the resulting partitioning which matches the proposed neural network architecture, there has been no other placement algorithm in the literature for this purpose.}
Table~\ref{tab:placement} compares the placement of $ \mu $PMUs using our proposed approach against the optimal placement in terms of diameter of the resulting vertex-cut partitioning \ahmed{for different budgets in terms of the total number of $ \mu $PMUs.} 
The set $ \mathcal{S}_g $ denote the buses with $ \mu $PMUs installed using the greedy algorithm, while $ \mathcal{S}_o $ denote the optimal placement of $ \mu $PMUs. 
The results shows that the proposed approach produces optimal placement in all cases except for $ 2 $ and $ 6 $ $ \mu $PMUs where the diameter is larger by only one.  \ahmed{Moreover, for $ 7 $, $ 8 $, and $ 9 $ $ \mu $PMUs, both approaches produce placement patterns that yield partitions of diameter $ 3 $.}

\begin{table}[htbp]
	\renewcommand{\arraystretch}{1.2}
	\centering
	\caption{Optimal and greedy $ \mu $PMU placement.}
	\begin{tabular}{c c c c c }
		\toprule
		{\textbf{\# $ \mu $PMUs}} & {\textbf{$ \mathcal{S}_g $}} &{$\textit{dia}( \mathcal{S}_g $)} & {\textbf{$ \mathcal{S}_o $}} &{$\textit{dia}( \mathcal{S}_o $)}\\
		\midrule
		1     & \{709\}  & $ 8 $  &  \{709\} &  $ 8 $ \\
		2     & \{702, 709\}  & $ 7  $      &  \{702, 708\} &  $ 6 $\\
		3     & \{702, 709, 734\}  & $ 5 $  &  \{702, 709, 734\}&  $ 5 $ \\
		\multirow{2}{*}{4}     & \{702, 709, 720  & \multirow{2}{*}{$ 5 $}       &  \{702, 709, 720, &  \multirow{2}{*}{$ 5 $}  \\
		& 734\}&  & 734\}& \\
		\multirow{2}{*}{5}      & \{702, 709, 720, & \multirow{2}{*}{$ 4 $}       &  \{702, 709, 720, &  \multirow{2}{*}{$ 4 $}  \\
		& 727, 734\}& &  727, 734\} & \\
		\multirow{2}{*}{6}      & \{702, 709, 720, & \multirow{2}{*}{$ 4 $}       &  \{703, 708, 713, &  \multirow{2}{*}{$ 3 $} \\
		&  727, 734, 738\}& &  720, 733, 737\}& \\
		\bottomrule
	\end{tabular}%
	\vspace{-5pt}
	\label{tab:placement}%
\end{table}%

Training samples were generated using the load and renewable generation dataset available in~\cite{bank2013development} modulated by the nominal values of the loads. \ahmed{The training and testing instants include diverse loading situations in different times of the day}.The power flow solver~\cite{Garces2015} was used to find the voltage profile (network states), and then the noisy measurements were generated using the measurement synthesizing functions~\eqref{eq:meas1} and~\eqref{eq:meas2}\footnotemark. The variance of the noise added to the $ \mu $PMU measurements was set to be $ 10^{-6} $, and the variances of the current magnitude and pseudo-measurements noise were $ 10^{-3} $ and $ 10^{-2} $, respectively. In each scenario, a total of $ 100,000 $ samples were used to train graph-pruned NNs using the TensorFlow~\cite{tensorflow2015-whitepaper} software library with $ 90 \%$ of the data used for training and $ 10 \% $ for validation. The ADAM optimizer~\cite{kingma2014adam} is used to train the neural networks used in this section. 
The estimate of the voltages at a bus is represented using $ 6 $ outputs representing the real and imaginary parts of the voltage phasor at each phase. The graph structure of the network is imposed on the NN connectivity. \ahmed{Hence, the number of neurons at each layer of the neural network is proportional to the number of buses, i.e., the width of the $ t $-th hidden layer is $ d_t N $ where each bus is represented by $ d_t $ neurons. Then, the neurons that represent bus $ n $ at the $ t $-th layer are connected only to the neurons representing the buses neighboring $ n $ in the graph $ \mathcal{G} $.}

\footnotetext{\ahmed{Although the linear solver is not extremely accurate, the obtained solutions represent realistic network states which are later used to generate measurements.}}

\ahmed{In order to assess the performance of the proposed approach, we use two baselines. On is the Wirtinger-Flow Gauss-Newton algorithm proposed in~\cite{dzafic2018hybrid}. The other is the data-driven optimization approach in~\cite{Zamzam-2018} where a shallow neural network is trained to initialize the Gauss-Newton approach. The width of the hidden layer in that approach is $ 2048 $, and the training parameter $ \epsilon $ is chosen to be $ \frac{1}{2} $ which represents the relaxation of the training cost function, cf.~\cite{Zamzam-2018}.} We define the average estimation accuracy $ \boldsymbol{\nu} $ of each algorithm as follows.
\begin{equation}
\boldsymbol{\nu} = \frac{1}{N} \sum_{i=1}^{N} \| \hat{\bf v}_i- {\bf v}_i^{\text{true}}\|_2^2
\end{equation}
where $ \hat{\bf v}_i $ is the estimated voltage profile from the noisy measurements generated using $ {\bf v}_{i}^{\text{true}} $. 

We assess the performance of the proposed graph-pruned NN in two different scenarios of measurements. In the first case (Scenario A), we employ $ 5 $ $ \mu $PMUs installed according to the proposed greedy algorithm, which represents optimal placement. In Fig.~\ref{fig:Network}, the buses where the $ \mu $PMUs are installed in Scenrario A are circled, and the lines where the current magnitudes are measured have rhombuses on them. The net load and renewable energy generation at all the phases of the buses with loads or RESs installed are used as pseudo-measurements. The number of layers in the graph-pruned NN in this scenario is $ 4 $ as the diameter of the resulting partitioning of this placement is equal to $ 4 $, which means that $ 4 $ layers of the graph-pruned NN are enough to represent the mapping between the measurements and the states. In this scenario, the total number of measurements is $ 103 $, which consist of $ 3 $ complex measurements of voltages at $ 5 $ buses with $ \mu $PMUs, $ 3 $ real measurement of current magnitudes installed at $ 7 $ locations in the network, and $ 26 $ complex pseudo-measurements at the buses with loads or RESs installed and without $ \mu $PMUs. \ahmed{The number of neurons representing each bus at the hidden layers is $ 48 $, $ 24 $, $ 12 $, and $ 6 $, respectively.}

Table~\ref{tab:results} shows the average performance of the proposed physic-aware learning approach, the hybrid data-driven and optimization method~\cite{Zamzam-2018} (SNN + G-N), and the Gauss-Newton (G-N) algorithm over $ 1000 $ cases of Scenario A. \ahmed{Simple feed-forward neural networks approaches require a lot of training data and computational resources. In addition, they often suffer from exploding or diminishing gradients. This results in bad estimates for the state of the network. For example, the average estimation accuracy of the state using a $ 4 $-layer feed-forward NN, with number of neurons similar to the $ 4 $-layer GPNN, was $ 2.69\times10^{-1} $ for noiseless measurements in Scenario A. Hence, we did not include a feed-forward NN in our comparisons.} The Gauss-Newton algorithm is initialized using the flat voltage profile. Clearly, the proposed learning method achieves superior performance where the accuracy of estimation is an order of magnitude better than the state-of-the-art Gauss-Newton approach. In addition, since the proposed learning method alleviates almost all the computational burden at the estimation time by shifting it to the training time, the running time of the proposed approach is three orders of magnitude less than the optimization-based approach.

\begin{table}[t]
	\renewcommand{\arraystretch}{1.25}
	\newcolumntype{C}[1]{>{\centering\arraybackslash}p{#1}}
	\caption{Performance comparison of different state estimators evaluated in Scenario A}
	\begin{center}
		\begin{tabular}{p{1.75cm}  C{2.25cm} C{2.25cm}}
			\toprule
			{\textbf{Method}}&  {{$\boldsymbol{\nu}$}} & {Time (ms)}\\
			\midrule 
			{\bf PAWNN}     &  $ 1.273\times 10^{-3} $ & $ 1.146  $  \\
			{\bf SNN + G-N}     &  $ 8.341\times 10^{-2} $ & $ 124
			$     \\
			{\bf G-N}     &  $ 5.833\times 10^{-1} $ & $ 866  $     \\
			\bottomrule
		\end{tabular}%
	\end{center}
	\label{tab:results}%
	\vspace{-20pt}
\end{table}%

\ahmed{In the second case (Scenario B), we evaluate the performance of the proposed graph-pruned neural network under a suboptimal $ \mu $PMU placement pattern, where $ \mu $PMUs are at the buses \{701, 704, 708, 738, 744\} and the other measurements are left unaltered. The diameter of the partitioning resulting from this $ \mu $PMU placement pattern is $ 6 $. This means that at least $ 6 $ layers of graph-pruned NN are required to realize the mapping between the measurements and the estimated state. The number of neurons that represent each bus in the first and second hidden layers of the $ 2 $-layer GPNN is $ 48 $ and $ 12 $, respectively, while for the $ 4 $-layer graph-pruned NN we use $ 48 $, $ 24 $, $ 12 $, and $ 6 $ neurons to represent each bus at the layers from the first to the fourth. For the $ 6 $-layer GPNN the number of neurons used to represent each bus at the layers from the first to the sixth are $ 48 $, $ 24 $, $ 12 $, $ 6 $, $ 6 $, and $ 6 $, respectively.}

\ahmed{Table ~\ref{tab:resultsB} summarizes the quality of state estimation using $ 2 $-, $ 4 $-, and $ 6 $-layer graph-pruned NNs for Scenario B. It can be seen that estimation of quality closer to the accuracy achieved in Scenario A is only possible using $ 6 $-layer GPNN due to the diameter ($ 6 $) of partitioning resulting from the considered placement. The small difference in the accuracy relative to Scenario A is likely due to the suboptimal placement of the $ \mu $PMUs which is not balanced over the network.}
		
\begin{table}[t]
	\renewcommand{\arraystretch}{1.25}
	\newcolumntype{C}[1]{>{\centering\arraybackslash}p{#1}}
	\caption{Performance comparison of different state estimators evaluated in Scenario B. PAWNN (4L) means with 4 Layers.} 
	\begin{center}
		\begin{tabular}{p{1.75cm}  C{2.25cm} C{2.25cm}}
			\toprule
			{\textbf{Method}}&  {{$\boldsymbol{\nu}$}} & {Time (ms)}\\
			\midrule 
			{\bf PAWNN} (2L)     &  $ 2.411\times 10^{-1} $ & $ 0.731  $  \\
			{\bf PAWNN} (4L)     &  $ 5.170\times 10^{-2} $ & $ 1.241  $  \\
			{\bf PAWNN} (6L)     &  $ 5.330\times 10^{-3} $ & $ 1.259  $  \\
			{\bf SNN + G-N}     &  $ 2.860\times 10^{-2} $ & $ 589	$     \\
			{\bf G-N}     &  $ 4.489\times 10^{-1} $ & $ 2891  $     \\
			\bottomrule
		\end{tabular}%
	\end{center}
	\vspace{-20pt}
	\label{tab:resultsB}%
\end{table}%

In order to show the quality of estimates provided by the proposed Graph-pruned neural network, we present the estimate of the voltage magnitudes and angles at phase (c) at all buses. Fig.~\ref{fig:estimate} depicts the estimated voltage magnitudes and angles using the Gauss-Newton method and the proposed Graph-pruned NN approach. Also, the absolute estimation error of the magnitudes and angles are shown in faded colors. The results show superior estimation performance for the proposed approach.
\begin{figure}[htbp]\vspace{-10pt}
	\centering
	\begin{subfigure}{0.5\textwidth}
		\centering
		\includegraphics[height=1.75in]{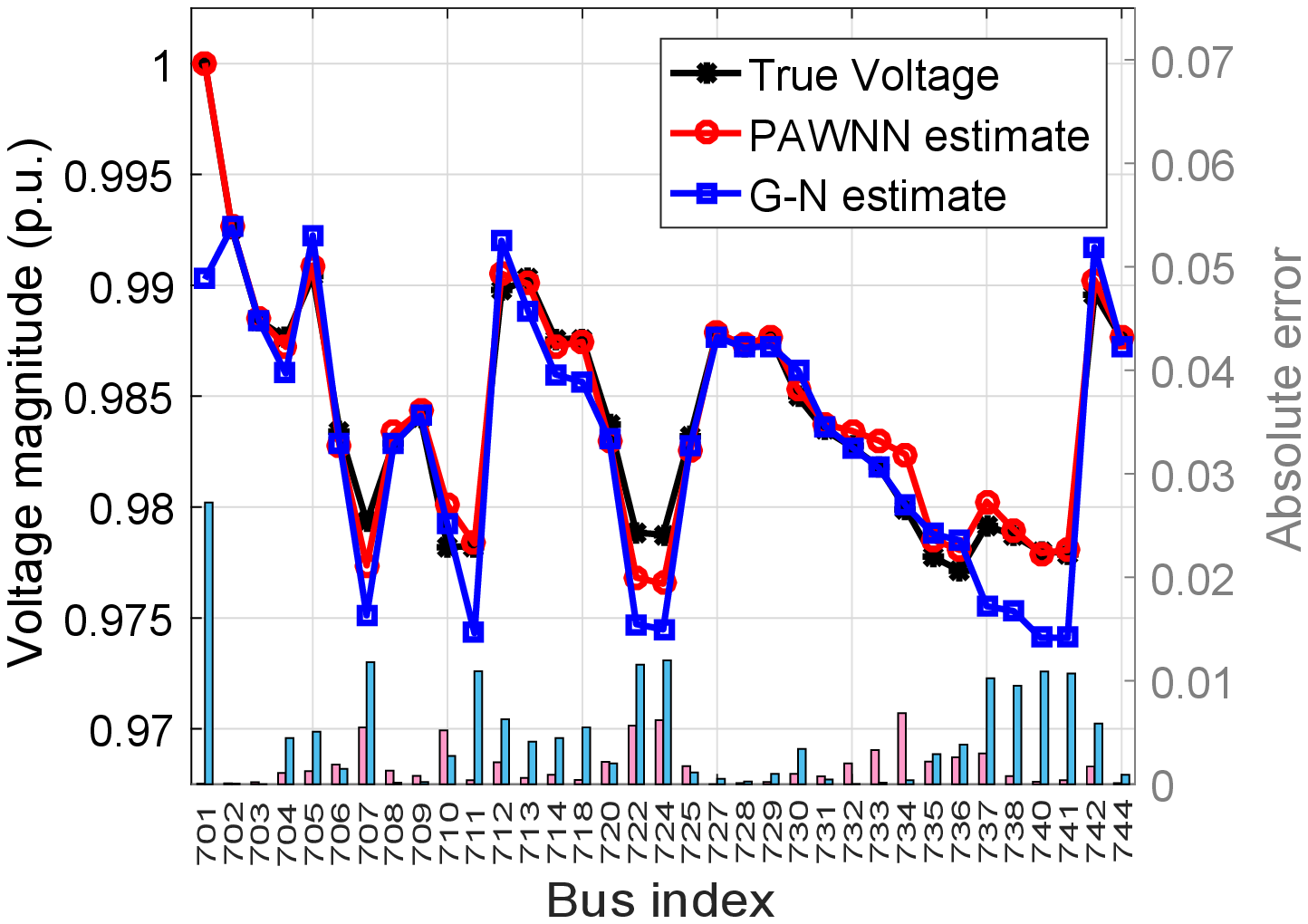}
		\caption{Voltage magnitudes}
		\label{fig:mag}
	\end{subfigure}
	\begin{subfigure}{0.5\textwidth}
		\centering
		\includegraphics[height=1.75in]{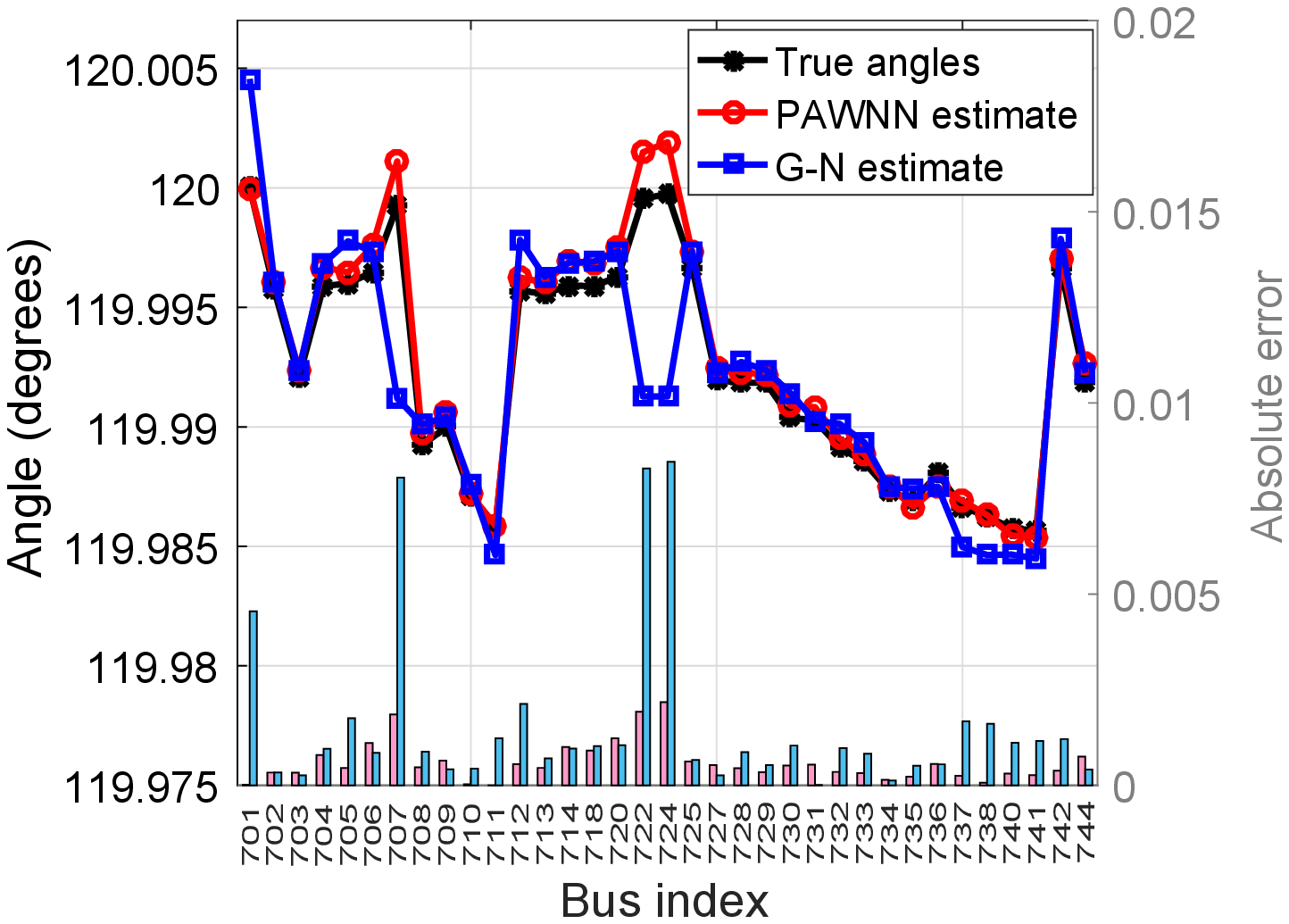}
		\caption{Voltage angles}
		\label{fig:phase}
	\end{subfigure}
	\caption{Estimation of the voltage magnitudes and angles at phase (b) of all buses in the IEEE-37 feeder. \small
		{(The absolute estimation errors are depicted in pink and light blue for the proposed approach and the Gauss-Newton solver, respectively.)}}
	\label{fig:estimate}
	\vspace{-20pt}
\end{figure}
	
{\subsection{Robustness of PAWNN}
As discussed earlier, the proposed approach is inherently robust against measurements failure or attacks. That is, erroneous measurements are not propagated in the neural network more than the number of layers. Therefore, only the estimation of voltages in the neighborhood is affected. In order to showcase the robustness of the approach, we tested the proposed learning model in a scenario where the measurements of the $ \mu $PMU installed at bus $ 734 $ are corrupted by Gaussian noise with a standard deviation of $ 10 $. \ahmed{This $ \mu $PMU provides $ 3 $ complex measurements representing the voltage phasors at the three phases.} While the proposed approach is oblivious to the noise level of each measurement, the weights in the weighted least squares formulation used by the Gauss-Newton approach were adjusted to account for the huge noise variance of the measurements at bus $ 734 $. The estimation of the voltage magnitude along all buses at phase (b) for both approaches is depicted in Fig.~\ref{fig:estimateR}. It is noticeable that the estimation of voltages at the buses surrounding the bus $ 734 $ are the only affected buses, while the G-N estimate is totally corrupted by corrupting the measurements of only one measuring unit despite adjusting the weights in the WLS formulation to be cognizant of these corrupt measurements. .
\begin{figure}[htbp]
		\centering
		\includegraphics[height=1.75in]{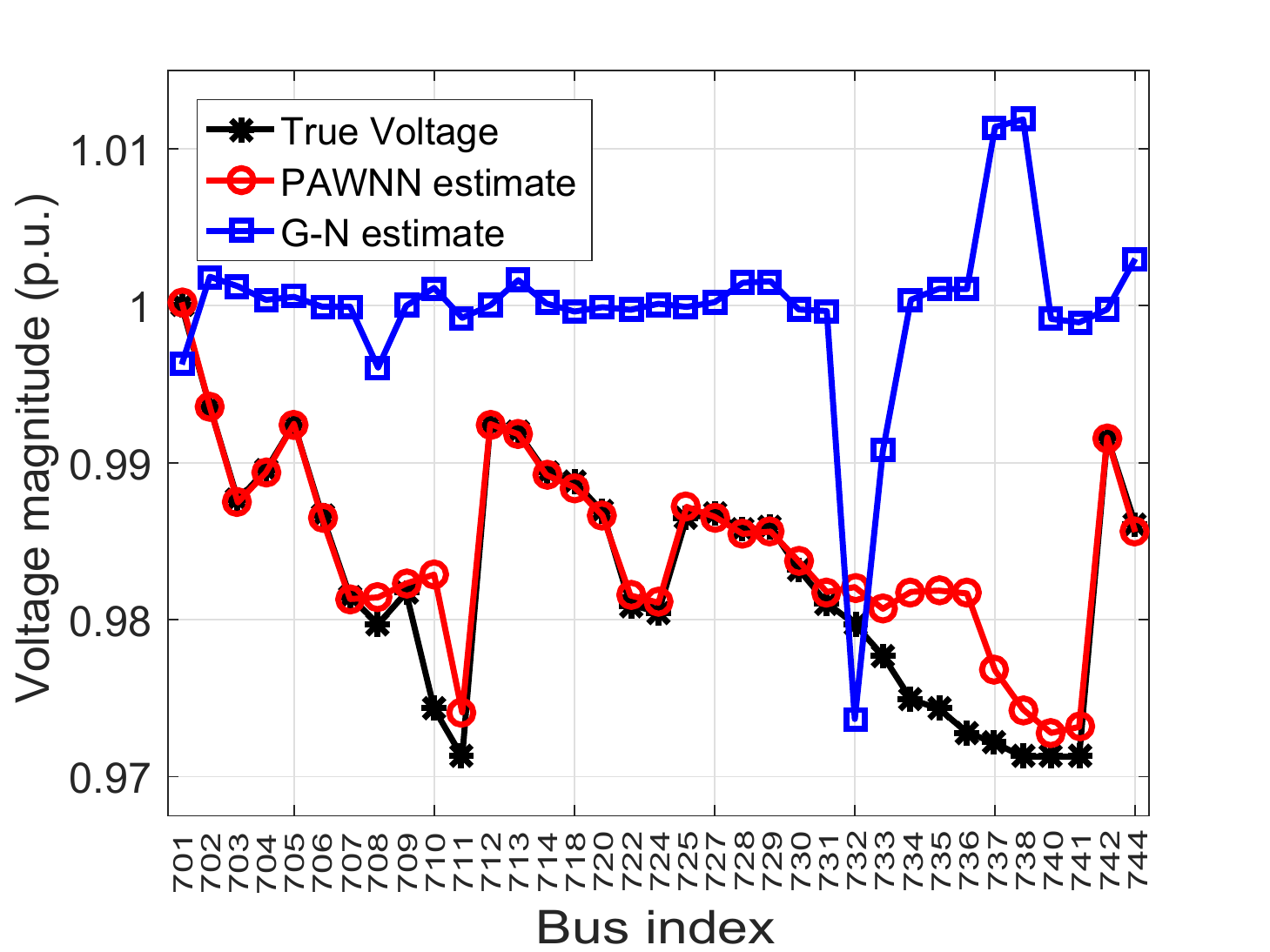}
	\caption{Estimation of the voltage magnitudes at phase (b) of all buses under corrupted $ \mu $PMU measurements at bus 734.}
	\label{fig:estimateR}
	\vspace{-15pt}
\end{figure}
 
\section{Conclusions}
	
This paper proposed a novel learning model that facilitates real-time monitoring of distribution network operation. The graph-pruned NN approach utilizes the approximate separability of the DSSE problem resulting from installing $ \mu $PMUs at some buses. By pruning the unneeded NN connections, the resulting model prevents over-fitting behavior exploiting the available knowledge regarding the network physics. A greedy algorithm was proposed for installing $ \mu $PMUs in order to minimize the diameter of the resulting partitioning of the distribution feeder. Simulation results corroborate the efficacy of the greedy algorithm for finding near-optimal placement solutions. Also, the proposed PAWNN approach shows superior performance in estimating the network state from few noisy real-time measurements and pseudo-measurements on the IEEE-37 distribution feeder. In addition, the proposed approach was shown to be robust against corrupted measurements.
%
%
%
%

\vspace{-5pt}
\bibliographystyle{IEEEtran}
	\bibliography{GraphNN}
\end{document}